\newcommand{\0}{{\mathcal O}}
\newcommand{\R}[0]{\mathbb R}
\newcommand{\Ds}[0]{\mathcal D}
\newtheorem{Th}{Theorem}[section]
\newtheorem{Lemma}{Lemma}[section]
\newtheorem{Prop}[Lemma]{Proposition}
\begin{document}

\title{On the regularity of the solution map of the Euler-Poisson system}
\author{H. Inci}

\maketitle

\begin{abstract}
In this paper we consider the Euler-Poisson system (describing a plasma made of ions with a negligible ion temperature) on the Sobolev spaces $H^s(\R^3)$, $s > 5/2$. Using a geometric approach we show that for any time $T > 0$ the corresponding solution map, $(\rho_0,u_0) \mapsto (\rho(T),u(T))$, is nowhere locally uniformly continuous. On the other hand it turns out that the trajectories of the ions are analytic curves in $\R^3$.
\end{abstract}

\section{Introduction}\label{section_introduction}

The initial value problem for the Euler-Poisson system in $\R^3$ is given by
\begin{align}
\label{ion}
\begin{split}
\rho_t + \operatorname{div}(\rho u) &= 0\\
u_t + (u \cdot \nabla) u &= -\nabla \phi \\
e^\phi - \Delta \phi &= \rho\\
u(0)=u_0,\;&\rho(0)=\rho_0
\end{split}
\end{align}
where $\phi:\R \times \R^3 \to \R$ is the electric potential, $\rho:\R \times \R^3 \to \R$ the ion density and $u=(u_1,u_2,u_3):\R \times \R^3 \to \R^3$ the ion velocity. System \eqref{ion} is well-posed in the Sobolev spaces $H^s, s>5/2$ -- see \cite{lwp}. More precisely, given $(\rho_0,u_0) \in (1+H^{s-1}(\R^3)) \times H^s(\R^3;\R^3)$ with $\rho_0(x) > 0$ for all $x \in \R^3$, there is $T > 0$ such that we have a unique solution
\[
(\rho,u) \in C\left([0,T];(1+H^{s-1}(\R^3)) \times H^s(\R^3;\R^3)\right)
\]
to \eqref{ion}. Moreover the solution map 
\[
(\rho_0,u_0) \mapsto (\rho(T),u(T))
\]
is continuous. To state our main result we denote the time $T$ solution map by $\Phi_T$. Its domain of definition around the equilibrium point $(1,0) \in (1+H^{s-1}(\R^3)) \times H^s(\R^3;\R^3)$ by $U_T \subseteq (1+H^{s-1}(\R^3)) \times H^s(\R^3;\R^3)$. We then have

\begin{Th}\label{thm_nonuniform}
Let $s > 5/2$ and $T > 0$. Then the solution map
\[
 \Phi_T:U_T \to (1+H^{s-1}(\R^3)) \times H^s(\R^3;\R^3), \quad (\rho_0,u_0) \mapsto (\rho(T),u(T))
\]
is nowhere locally uniformly continuous.
\end{Th}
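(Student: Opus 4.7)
The plan is to follow the Lagrangian (Ebin--Marsden) strategy: rewrite the Euler--Poisson system as a smooth ODE on a Hilbert manifold of Sobolev diffeomorphisms, and then extract the theorem from the well-known failure of uniform continuity of the composition operator that returns to Eulerian coordinates.

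First, pass to Lagrangian variables. Let $\D^s := \{\id + w : w \in H^s(\R^3,\R^3),\ \inf_x \det d(\id + w)(x) > 0\}$ be the Hilbert manifold of $H^s$-diffeomorphisms of $\R^3$, and let $\eta(t) \in \D^s$ solve $\partial_t \eta = u(t,\eta)$, $\eta(0) = \id$. Setting $v := u \circ \eta$, the momentum equation becomes $\partial_t v = -(d\eta)^{-T}\nabla(\phi \circ \eta)$, while mass conservation yields $\rho \circ \eta = \rho_0/\det(d\eta)$. The key analytic step is to show that $\phi^\eta := \phi \circ \eta$ depends smoothly on $(\rho_0,\eta)$: conjugating Poisson's equation by $\eta$ gives
\[
e^{\phi^\eta} - \Delta^\eta \phi^\eta = \rho_0/\det(d\eta),
\]
where $\Delta^\eta = \tfrac{1}{\sqrt g}\partial_i(\sqrt g\, g^{ij}\partial_j \cdot)$ with $g = (d\eta)^T d\eta$ has coefficients polynomial in $(d\eta)^{\pm 1}$ and their first derivatives, hence smooth in $\eta \in \D^s$. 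Since the linearization at $(\rho_0,\eta) = (1,\id)$ is the isomorphism $1 - \Delta : H^{s+1}(\R^3) \to H^{s-1}(\R^3)$, the implicit function theorem produces the desired smooth solver $\phi^\eta(\rho_0,\eta) \in H^{s+1}(\R^3)$.

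With this in hand, Euler--Poisson reduces to the smooth (in fact real analytic) ODE $\dot\eta = v$, $\dot v = -(d\eta)^{-T}\nabla\phi^\eta(\rho_0,\eta)$ on the Hilbert manifold $\D^s \times H^s(\R^3,\R^3)$, parametrized by $\rho_0 \in 1+H^{s-1}(\R^3)$. Standard Hilbert-manifold ODE theory then produces a smooth time-$T$ Lagrangian map
\[
\mathcal{L}_T : U_T \to \D^s \times H^s(\R^3,\R^3), \qquad (\rho_0,u_0) \mapsto (\eta(T),v(T)),
\]
which, by time reversibility of Euler--Poisson under $(\rho,u) \mapsto (\rho,-u)$, is, for each fixed $\rho_0$, a local $C^\infty$-diffeomorphism. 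The Eulerian solution map then factors as
\[
\Phi_T(\rho_0,u_0) = \Theta_{\rho_0}\bigl(\mathcal{L}_T(\rho_0,u_0)\bigr), \qquad \Theta_{\rho_0}(\eta,v) := \bigl([\rho_0/\det d\eta]\circ\eta^{-1},\; v\circ\eta^{-1}\bigr).
\]

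Suppose, for contradiction, that $\Phi_T$ were uniformly continuous on some open ball $B \subset U_T$. Shrinking $B$ so that $\mathcal{L}_T|_B$ is bi-Lipschitz onto its image, this would force $\Theta_{\rho_0}$ to be uniformly continuous on the open set $\mathcal{L}_T(B) \subset \D^s \times H^s$. I would then derive a contradiction by the standard high-frequency construction showing that the composition operator $(\eta,v) \mapsto v \circ \eta^{-1}$ is nowhere locally uniformly continuous on $\D^s \times H^s$: near any $(\eta_*,v_*) \in \mathcal{L}_T(B)$, pick $x_0$ with $\partial_j v_*(\eta_*^{-1}(x_0)) \ne 0$, a cut-off $\chi \in C_c^\infty(\R^3)$ supported near $\eta_*^{-1}(x_0)$, a direction $e \in S^2$, and set
\[
\eta_n^\pm := \eta_* \pm \epsilon_n\, \chi(x)\sin(n\, e\cdot x)\, e_j.
\]
A direct computation gives $\|\eta_n^+ - \eta_n^-\|_{H^s} = O(\epsilon_n n^{s-1})$, while a first-order Taylor expansion of $v_* \circ (\eta_n^\pm)^{-1}$ yields $\|v_* \circ (\eta_n^+)^{-1} - v_* \circ (\eta_n^-)^{-1}\|_{H^s} \gtrsim \epsilon_n n^s$; the choice $\epsilon_n = n^{-s+1/2}$ gives inputs tending to $(\eta_*,v_*)$ but outputs bounded away from each other, yielding the contradiction. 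I expect the main technical obstacle to be the smooth dependence of the Lagrangian elliptic solver $\phi^\eta(\rho_0,\eta)$ on $\eta \in \D^s$: the nonlinear term $e^\phi$ precludes a direct appeal to divergence-form elliptic regularity, so one must combine Ebin--Marsden-type smoothness lemmas for $\Delta^\eta$ with Nemytskii/Moser estimates in $H^{s+1}(\R^3)$.
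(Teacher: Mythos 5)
The Lagrangian part of your plan (pass to $\D^s\times H^s$, conjugate the Poisson equation by $\eta$, show the elliptic solver is smooth in $(\rho_0,\eta)$ via an implicit/inverse function theorem argument, and reduce Euler--Poisson to an analytic ODE on $\D^s\times H^s$) matches the paper's Section 2 and is essentially correct. The gap is in the last step, where you try to reduce the theorem to the known failure of uniform continuity of the composition operator.

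You factor $\Phi_T=\Theta_{\rho_0}\circ\mathcal L_T$ and claim that if $\Phi_T$ were uniformly continuous on a ball $B$, then $\Theta_{\rho_0}$ would be uniformly continuous ``on the open set $\mathcal L_T(B)$.'' But $\mathcal L_T(B)$ is not open in $\D^s\times H^s$. The domain of $\mathcal L_T$ is modeled on $H^{s-1}\times H^s$, the codomain on $H^s\times H^s$, and the $\rho_0$-dependence of the Lagrangian flow passes through the smoothing elliptic solver, so $d\mathcal L_T$ cannot be surjective and the image is a thin subset. Your candidate perturbations $\eta_n^{\pm}=\eta_*\pm\epsilon_n\chi\sin(ne\cdot x)e_j$ therefore live \emph{outside} $\mathcal L_T(B)$: uniform continuity of $\Theta_{\rho_0}$ restricted to $\mathcal L_T(B)$ says nothing about its behaviour at $\eta_n^{\pm}$. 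To run your argument you would first have to exhibit perturbations of the \emph{initial data} whose time-$T$ flows differ by the high-frequency profile you want, and this is not addressed at all --- in fact, controlling how perturbations of $(\rho_0,u_0)$ propagate to perturbations of $\varphi(T)$ is exactly the hard part.

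The paper avoids this by working entirely at the level of initial data and using the transport of a modified vorticity, $\omega(t)=\big(\det(d\varphi)^{-1}[d\varphi]\omega_0\big)\circ\varphi^{-1}$. It takes a carrier perturbation $w_n$ of $u_0$ with support shrinking like $r_n\sim 1/n$ around a point $x^\ast$ far from $\operatorname{supp}u_0$, and an additional perturbation $\frac1n h$ chosen, via Lemma \ref{lemma_dense} (which computes $d_{(1,0)}\Psi_T$ explicitly as a Fourier multiplier), so that the two flow maps at $x^\ast$ are displaced by $\gtrsim 1/n$. Combined with the $C^1$-Lipschitz bound on $\Psi_T$, this separates the supports of the transported $\operatorname{curl}w_n$ pieces, and a support-separation estimate in $H^{s-1}$ then gives a lower bound $\gtrsim \|\operatorname{curl}w_n\|_{s-1}\gtrsim R$. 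This is a shrinking-support, not a high-frequency-oscillation, mechanism, and it is essential precisely because one can only move within the (thin) image of the Lagrangian solution map.

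\end{document}
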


We also show the following phenomenon which occurs also in other hydrodynamic models (see \cite{constantin})

\begin{Th}\label{analytic_trajectory}
The trajectories of the ions are analytic curves in $\R^3$.
\end{Th}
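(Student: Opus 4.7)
The plan is to recast the Euler-Poisson system in Lagrangian coordinates as a second-order ODE on a Banach space whose right-hand side is real analytic, and then apply the classical Cauchy existence theorem for analytic ODEs in Banach spaces. A trajectory $t\mapsto\eta(t,x_0)$ will then be the composition of the analytic curve $t\mapsto\eta(t)\in H^s(\R^3;\R^3)$ with the continuous linear evaluation $w\mapsto w(x_0)$ (continuous because $s > 3/2$ gives $H^s\hookrightarrow C^0$), hence analytic.

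Set up the flow $\p_t\eta = u\circ\eta$, $\eta(0)=\id$, in $\id + H^s(\R^3;\R^3)$ under the open condition $\det D\eta > 0$. Writing $v=\p_t\eta$, the momentum equation gives $\p_t v = -(\nabla\phi)\circ\eta$, and conservation of mass in Lagrangian form yields $\rho\circ\eta = \rho_0/J(\eta)$ with $J(\eta):=\det D\eta$. Setting $\tilde\phi := \phi\circ\eta$, the equation $e^\phi-\Delta\phi = \rho$ pulls back to
\[
e^{\tilde\phi} - L_\eta \tilde\phi = \rho_0/J(\eta),
\]
where $L_\eta$ is the pullback of $\Delta$ under $\eta$, a uniformly elliptic second-order operator whose coefficients are polynomial in the entries of $D\eta$ and $(D\eta)^{-1}$. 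Using $(\nabla\phi)\circ\eta=(D\eta)^{-T}\nabla\tilde\phi$, the closed Lagrangian system reads
\[
\dot\eta = v, \qquad \dot v = -\mathcal F(\eta,\rho_0), \qquad \mathcal F(\eta,\rho_0) := (D\eta)^{-T}\nabla\tilde\phi(\eta,\rho_0).
\]

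The technical heart of the proof is to show that $\mathcal F$ is real analytic from an open neighborhood of $(\id,1)$ in $(\id+H^s)\times(1+H^{s-1})$ into $H^s$. Its algebraic building blocks (differentiation, determinant, matrix inversion, pointwise multiplication) are polynomial or rational operations on the Banach algebras $H^s$ and $H^{s-1}$, valid since $s > 5/2$, hence analytic; and $\tilde\phi\mapsto e^{\tilde\phi}$ is analytic on $H^s$. The remaining step, that $\tilde\phi\in H^{s+1}$ depends analytically on $(\eta,\rho_0)$, follows from the analytic implicit function theorem applied to
\[
G(\eta,\rho_0,\tilde\phi) := e^{\tilde\phi} - L_\eta\tilde\phi - \rho_0/J(\eta),
\]
which is an analytic map $(\id+H^s)\times(1+H^{s-1})\times H^{s+1}\to H^{s-1}$. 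Its $\tilde\phi$-derivative at a solution is $e^{\tilde\phi_0} - L_{\eta_0}:H^{s+1}\to H^{s-1}$, boundedly invertible because the zeroth-order part $e^{\tilde\phi_0}$ is strictly positive and the principal part $-L_{\eta_0}$ is elliptic.

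Once $\mathcal F$ is analytic, the Lagrangian ODE has an analytic right-hand side on the Banach space $H^s \times H^s$, and the Cauchy theorem for analytic ODEs in Banach spaces gives that $t\mapsto(\eta(t),v(t))$ is real analytic on the whole interval of existence; evaluating at $x_0$ yields the analyticity of the trajectory. The principal difficulty is handling simultaneously the Boltzmann-type nonlinearity $e^{\tilde\phi}$ (new compared to the Euler case of \cite{constantin}) and the $\eta$-dependence of the principal symbol of $L_\eta$: one must verify that $G$ is analytic, that the two-derivative gain of $L_\eta$ is preserved uniformly as $\eta$ varies in $\id+H^s$, and that $e^{\tilde\phi_0}-L_{\eta_0}$ remains a bijection $H^{s+1}\to H^{s-1}$ throughout the neighborhood.
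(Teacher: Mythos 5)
The overall strategy is the same as the paper's: Lagrangian ODE $\ddot\eta = -\mathcal F(\eta,\rho_0)$, analyticity of $\mathcal F$ via an inverse/implicit function theorem argument applied to a conjugated elliptic operator, the Cauchy theorem for analytic ODEs, and pointwise evaluation. However, there is a concrete regularity gap in the central analyticity claim.

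You assert that $\tilde\phi = \phi\circ\eta$ depends analytically on $(\eta,\rho_0)$ as a map into $H^{s+1}$, and that $G$ (equivalently $L_\eta$) maps $H^{s+1}$ into $H^{s-1}$. Neither statement holds. First, composition $R_\eta: H^{s'} \to H^{s'}$ with $\eta \in \id + H^s(\R^3;\R^3)$ is continuous only for $0 \le s' \le s$; even though the ``Eulerian'' $\phi$ lies in $H^{s+1}$, its pullback $\tilde\phi = \phi\circ\eta$ lives only in $H^s$. Second, writing out $L_\eta = a^{ij}\partial_i\partial_j + b^i\partial_i$, the first-order coefficients $b^i$ involve second derivatives of $\eta$, hence $b^i \in H^{s-2}$ only; with $\tilde\phi \in H^{s+1}$ the term $b^i\partial_i\tilde\phi$ lies in $H^{s-2}$, not $H^{s-1}$. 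Thus $L_\eta: H^{s+1} \to H^{s-2}$ is the correct target, and the honest implicit-function-theorem output is that $\tilde\phi$ is analytic only as an $H^s$-valued map. Consequently $\mathcal F(\eta,\rho_0) = (D\eta)^{-\top}\nabla\tilde\phi$ lands only in $H^{s-1}$, and the ODE $\ddot\eta = -\mathcal F(\eta,\rho_0)$ does not close in $H^s$ — the Picard iteration would lose a derivative.

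This is exactly the issue the paper flags with the remark ``But we need more regularity for $\nabla\phi$,'' and it is resolved by an extra step that your proposal lacks: apply $\nabla$ to the \emph{Eulerian} elliptic equation first, obtaining $(e^\phi-\Delta)\nabla\phi = \nabla\rho$, and only then conjugate by $R_\eta$. Because $\nabla\rho = \bigl([d\eta^\top]^{-1}\nabla(\rho_0/J(\eta))\bigr)\circ\eta^{-1}$ lives in $H^{s-2}$ and $e^\phi$ enters only as a zeroth-order coefficient (already known, $H^s$-valued, from the first step), the full two-derivative elliptic gain is available and $(\nabla\phi)\circ\eta$ comes out analytic with values in $H^s$. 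This is the content of Lemma~\ref{lemma_analytic_linear} and Proposition~\ref{prop_analytic}, and it is needed to make the ODE genuinely analytic on $H^s\times H^s$. To repair your argument you would need to add this gradient-of-the-elliptic-equation step; as written, the claim $\mathcal F: (\id+H^s)\times(1+H^{s-1})\to H^s$ is not justified.
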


To establish our results we give a geometric formulation of \eqref{ion} following a similar procedure as in \cite{lagrangian, b_family, sqg}. The strategy is to express \eqref{ion} in Lagrangian coordinates, i.e. in terms of the flow map
\[
 \varphi_t = u \circ \varphi, \quad \varphi(0)=\operatorname{id}
\]
where $\operatorname{id}:\R^3 \to \R^3$ is the identity map. This approach was popularized by the works \cite{arnold,ebin_marsden} for the incompressible Euler equations. In a second step we use the fact that a modified vorticity is ''transported'' by the flow to produce perturbations which are irregular in some sense. 

\section{Geometric Formulation}\label{section_geometric}

To motivate the geometric formulation let us assume that $(\rho,u)$ is a solution to \eqref{ion}. Consider the flow map of $u$, i.e.
\[
 \varphi_t = u \circ \varphi,\quad \varphi(0)=\operatorname{id}
\]
We take the $t$ derivative in
\[
 \frac{d}{dt} (\det(d\varphi) \rho \circ \varphi)=(\operatorname{div}u) \circ \varphi \det(d\varphi) \rho \circ \varphi + \det(d\varphi) (\rho_t + (u \cdot \nabla) \rho) \circ \varphi=0 
\] 
where $d\varphi$ denotes the jacobian of $\varphi$. Reexpressing gives
\[
 \rho = \left(\frac{\rho_0}{\det(d\varphi)}\right) \circ \varphi^{-1}
\]
Denote by $\phi=F^{-1}(\rho)$ the solution for $\phi$ in \eqref{ion}. We can thus write
\[
u_t + (u \cdot \nabla) u = - \nabla F^{-1}(\left(\frac{\rho_0}{\det(d\varphi)}\right) \circ \varphi^{-1}) 
\]
Using $\varphi_{tt} = (u_t + (u \cdot \nabla)u) \circ \varphi$
\[
 \varphi_{tt} = -\left(\nabla F^{-1}(\left(\frac{\rho_0}{\det(d\varphi)}\right) \circ \varphi^{-1})\right) \circ \varphi 
\]
Our goal is to prove that the right hand side above is an analytic expression in $\varphi$. But first we have to introduce the appropriate functional space for $\varphi$. Consider for $s > 5/2$
\[
\Ds^s(\R^3) = \{ \varphi:\R^3 \to \R^3 \;|\; \varphi - \operatorname{id} \in H^s(\R^3;\R^3) \mbox{ and } \det(d_x\varphi) > 0 \;\forall x \in \R^3 \}
\] 
By the Sobolev imbedding theorem this spaces consists of $C^1$ diffeomorphisms. As a subset of $\operatorname{id}+H^s(\R^3;\R^3)$ it gets a differential structure and is a connected topological group under composition -- see \cite{composition}. We know that every $u \in C([0,T];H^s(\R^3;\R^3))$ generates a unique flow $\varphi \in C^1([0,T];\Ds^s(\R^3))$ with $\varphi(0)=\operatorname{id}$ -- see \cite{lagrangian}. To handle the elliptic equation in \eqref{ion} we use the lemma from \cite{lwp}
\begin{Lemma}\label{lemma_elliptic}
Let $\rho \in 1+H^{s-1}(\R^3)$ with $\rho > 0$ in $\R^3$. Then there is a unique $\phi \in H^{s+1}(\R^3)$ with
\[
 e^\phi-\Delta \phi = \rho
\]
\end{Lemma}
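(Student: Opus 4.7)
The plan is a variational approach. Write $\rho = 1+r$ with $r \in H^{s-1}(\R^3)$; since $s-1 > 3/2$, Sobolev embedding gives that $r$ is continuous and vanishes at infinity, so the hypothesis $\rho > 0$ upgrades to $m := \inf_{\R^3}\rho > 0$ and $M := \sup_{\R^3}\rho < \infty$. The equation $e^\phi - \Delta \phi = 1+r$ is the Euler-Lagrange equation of the strictly convex functional
\[
J(\phi) = \int_{\R^3}\!\left( \tfrac{1}{2}|\nabla\phi|^2 + (e^\phi - 1 - \phi) - r\phi \right) dx,
\]
where the ill-defined linear piece $\int \phi\,dx$ has been absorbed into the nonlinearity so each summand has a chance of being integrable for $\phi \in H^1(\R^3)$.

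For existence I would minimize $J$ over a suitable Orlicz-type subset of $H^1(\R^3)$. The integrand $t \mapsto e^t - 1 - t$ is nonneg, vanishes to second order at $0$, and grows superlinearly as $t \to +\infty$ and linearly as $t \to -\infty$; combined with $\tfrac{1}{2}\|\nabla\phi\|_{L^2}^2$ this yields coercivity, while $\int r\phi$ is controlled by Cauchy-Schwarz. Strict convexity, weak lower semicontinuity, and coercivity then furnish a unique minimizer $\phi_*$, which weakly solves the equation.

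For regularity I would first obtain $\phi_* \in L^\infty$ via a maximum principle argument, using that $\phi_* \in H^1$ decays at infinity (so extrema are attained in the interior or in the limit) and that at any interior maximum $x_0$ one has $\Delta \phi_*(x_0) \leq 0$, hence $e^{\phi_*(x_0)} \leq \rho(x_0) \leq M$, giving $\phi_* \leq \log M$; symmetrically $\phi_* \geq \log m$. Rewriting the equation as $-\Delta\phi_* = r - (e^{\phi_*} - 1)$ and using the composition lemma for Sobolev spaces (valid once $\phi_* \in L^\infty$), I would bootstrap: if $\phi_* \in H^k \cap L^\infty$ for $k > 3/2$ then $e^{\phi_*} - 1 \in H^k$ (since $H^k$ is a Banach algebra and closed under smooth composition vanishing at $0$), so the right-hand side lies in $H^{\min(k,s-1)}$ and elliptic regularity yields $\phi_* \in H^{\min(k,s-1)+2}$. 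Iterating terminates at $\phi_* \in H^{s+1}$.

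Uniqueness follows from the same convexity: two solutions $\phi_1,\phi_2 \in H^{s+1}$ satisfy, after subtracting and pairing with $\phi_1 - \phi_2$,
\[
\int_{\R^3}\!\left( |\nabla(\phi_1 - \phi_2)|^2 + (e^{\phi_1} - e^{\phi_2})(\phi_1 - \phi_2) \right) dx = 0,
\]
and strict monotonicity of $\exp$ forces $\phi_1 = \phi_2$. The main obstacle I expect is the existence/coercivity step on $\R^3$: with no Poincar\'e inequality $\|\phi\|_{L^2} \lesssim \|\nabla\phi\|_{L^2}$ available, control of $\phi$ must be extracted from the nonlinearity itself (quadratic on $\{\phi \geq 0\}$ but only linear on $\{\phi < 0\}$), and this must be fit into a function-space framework where the direct method of the calculus of variations applies cleanly.
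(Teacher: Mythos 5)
The paper does not prove this lemma; it is imported as given from Lannes--Linares--Saut \cite{lwp}, so there is no in-house argument to compare against and your proposal is effectively a reconstruction. Your variational scheme is the natural and correct route. The functional $J$ has the right Euler--Lagrange equation; the truncation test $\psi=(\phi_\ast - \log M)^+$ in the weak formulation gives $\int |\nabla\psi|^2 + \int (e^{\phi_\ast}-\rho)\psi = 0$ with both terms nonnegative on $\{\phi_\ast>\log M\}$, hence $\phi_\ast \le \log M$, and symmetrically $\phi_\ast \ge \log m$, which is exactly your $L^\infty$ barrier; the bootstrap using that $H^k(\R^3)$, $k>3/2$, is a Banach algebra on which $t\mapsto e^t-1$ acts analytically, combined with elliptic regularity for $1-\Delta$, does terminate at $H^{s+1}$ (one should start the ladder by noting $\phi_\ast\in H^1\cap L^\infty$ already forces $e^{\phi_\ast}-1\in H^1$, hence $\phi_\ast\in H^2$, which is above the algebra threshold); and uniqueness from monotonicity of $e^t$ is immediate.

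The single point you flag as a potential obstacle, coercivity of $J$ on $\R^3$ without a Poincar\'e inequality, is indeed where the only real work lies, and it can be closed. Use the pointwise bound $e^t-1-t \ge c\min(t^2,|t|)$ together with the homogeneous Sobolev inequality $\|\phi\|_{L^6} \lesssim \|\nabla\phi\|_{L^2}$. Splitting $\|\phi\|_{L^2}^2$ over $\{|\phi|<1\}$ and its complement, and bounding the measure of $\{|\phi|\ge 1\}$ by $c^{-1}\int(e^\phi-1-\phi)$, gives
\[
\|\phi\|_{L^2}^2 \lesssim \int(e^\phi-1-\phi) \;+\; \|\nabla\phi\|_{L^2}^2\Bigl(\int(e^\phi-1-\phi)\Bigr)^{2/3}.
\]
Feeding this into $J(\phi) \ge \tfrac12\|\nabla\phi\|_{L^2}^2 + \int(e^\phi-1-\phi) - \|r\|_{L^2}\|\phi\|_{L^2}$ and writing $A=\|\nabla\phi\|_{L^2}^2$, $B=\int(e^\phi-1-\phi)$, $L=\|\phi\|_{L^2}$, one finds that $J$ bounded forces $A+B\lesssim 1+L$ while $L^2\lesssim B+AB^{2/3}\lesssim L^{5/3}$, so $L$, hence $A$ and $B$, stay bounded. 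That furnishes coercivity on the effective domain of $J$ and the direct method applies; the rest of your argument then goes through.
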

\noindent
Denote this $\phi$ by $F^{-1}(\rho)$. We also introduce the following open subset of $H^{s-1}(\R^3)$
\[
 U_{\bar \rho} = \{ \bar \rho \in H^{s-1}(\R^3) \;|\; \bar \rho(x) > -1 \quad \forall x \in \R^3 \} 
\]
In particular our $\rho$ in \eqref{ion} lives in $1+U_{\bar \rho}$.
\begin{Lemma}\label{lemma_local_diff}
Let $\bar \rho_0=\rho_0-1 \in U_{\bar \rho}$. Then there is an open set $W \subseteq H^{s-2}(\R^3)$ with $\bar \rho_0 \in W$ and an open set $V \subseteq H^s(\R^3)$ such that
\[
 V \subseteq H^s(\R^3) \to W \subseteq H^{s-2}(\R^3),\quad \phi \mapsto e^\phi -1 - \Delta \phi
\]
is an analytic diffeomorphism.
\end{Lemma}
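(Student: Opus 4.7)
The plan is to apply the analytic inverse function theorem in Banach spaces to the map
\[
 G: H^s(\R^3) \to H^{s-2}(\R^3), \quad G(\phi) := e^\phi - 1 - \Delta \phi.
\]
First I would verify that $G$ is well defined and analytic. Since $s > 5/2 > 3/2$, the space $H^s(\R^3)$ is a Banach algebra, so the exponential series $e^\phi - 1 = \sum_{k=1}^\infty \phi^k/k!$ converges in $H^s(\R^3)$ with estimates $\|\phi^k\|_{H^s} \leq C^{k-1}\|\phi\|_{H^s}^k$; hence $\phi \mapsto e^\phi - 1$ is entire analytic from $H^s(\R^3)$ to itself. Composing with the continuous embedding $H^s \hookrightarrow H^{s-2}$ and adding the bounded linear operator $-\Delta: H^s \to H^{s-2}$ gives analyticity of $G$.

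Next I would fix a base point using Lemma \ref{lemma_elliptic}. Applied to $\rho_0 = 1 + \bar\rho_0 > 0$, it produces a unique $\phi_0 \in H^{s+1}(\R^3) \subseteq H^s(\R^3)$ with $G(\phi_0) = \bar\rho_0$. The Fréchet derivative at $\phi_0$ is the Schrödinger-type operator
\[
 L := dG_{\phi_0}: H^s(\R^3) \to H^{s-2}(\R^3), \quad Lh = -\Delta h + a\, h,
\]
with $a := e^{\phi_0}$. By Sobolev embedding $\phi_0 \in H^{s+1}$ is continuous and tends to zero at infinity, so $a$ is continuous with $0 < \delta \leq a \leq \delta^{-1}$ for some $\delta > 0$, while $a - 1 \in H^{s+1}(\R^3)$.

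The key step is to show that $L$ is a topological isomorphism. Boundedness follows from the Banach algebra property. For invertibility I would use the symmetric bilinear form
\[
 B(u,v) := \int_{\R^3} \left( \nabla u \cdot \nabla v + a\, uv \right) dx
\]
on $H^1(\R^3)$, which satisfies $B(u,u) \geq \min(1,\delta)\|u\|_{H^1}^2$. The Lax--Milgram theorem then yields that $L$ is an isomorphism from $H^1(\R^3)$ onto $H^{-1}(\R^3)$. Standard elliptic bootstrapping, exploiting that $a - 1 \in H^{s+1}$ acts as a multiplier on the relevant intermediate Sobolev scales, upgrades this to an isomorphism $L : H^s(\R^3) \to H^{s-2}(\R^3)$.

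With $G$ analytic and $dG_{\phi_0}$ an isomorphism, the analytic inverse function theorem in Banach spaces produces open neighborhoods $V \subseteq H^s(\R^3)$ of $\phi_0$ and $W \subseteq H^{s-2}(\R^3)$ of $\bar\rho_0$ such that $G|_V : V \to W$ is an analytic diffeomorphism, as claimed. The main obstacle I expect is the elliptic bootstrap: one must carefully track the regularity index in the Sobolev scale, using that multiplication by $a-1 \in H^{s+1}$ preserves each intermediate space, to promote the variational $H^1 \to H^{-1}$ isomorphism from Lax--Milgram all the way up to the desired $H^s \to H^{s-2}$ statement with a nonconstant coefficient.
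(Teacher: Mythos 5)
Your proposal follows the same route as the paper: verify analyticity of $\phi \mapsto e^\phi - 1 - \Delta\phi$ via the Banach algebra property of $H^s$, identify the base point $\phi_0 = F^{-1}(\rho_0)$ from Lemma~\ref{lemma_elliptic}, check that $d_{\phi_0}\Gamma = e^{\phi_0}\cdot{} - \Delta$ is an isomorphism $H^s \to H^{s-2}$, and invoke the analytic inverse function theorem. The only difference is that the paper simply cites ``linear elliptic theory'' for the isomorphism of the linearization, whereas you spell out the Lax--Milgram argument (using $e^{\phi_0} \geq \delta > 0$, which holds since $\phi_0 \in H^{s+1}$ is continuous and vanishes at infinity) together with the elliptic bootstrap using $e^{\phi_0}-1 \in H^{s+1}$ as a multiplier — a correct and welcome elaboration of the step the paper leaves to the reader.
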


\begin{proof}
From the Banach algebra property of $H^s$ we see that
\[
 H^s(\R^3) \to H^s(\R^3),\quad \varphi \mapsto e^\phi-1 = \sum_{k=1}^\infty \frac{1}{k!} \phi^k
\]
is analytic. Therefore
\[
 \Gamma:H^s(\R^3) \to H^{s-2}(\R^3),\quad \phi \mapsto e^\phi-1-\Delta \phi 
\]
is analytic. Let 
$\phi_0=F^{-1}(\rho_0)$. The differential of $\Gamma$ at $\phi_0$ is given by
\[
 d_{\phi_0}\Gamma: H^s(\R^3) \to H^{s-2}(\R^3),\quad h \mapsto e^{\phi_0} \cdot h - \Delta h
\]
which by linear elliptic theory is seen to be an isomorphism. By the Inverse Function Theorem we get open neighborhoods of $\phi_0$ in $H^s$ resp. $\bar \rho_0$ in $H^{s-2}$ on which $\Gamma$ is an analytic diffeomorphism.
\end{proof}

For $\varphi \in \Ds^s(\R^3)$ we denote by $R_\varphi$ the linear map $f \mapsto f \circ \varphi$. This map is continuous on $H^{s'}$ for $0 \leq s' \leq s$. Note also that $R_{\varphi^{-1}}=R_\varphi^{-1}$. For $\Gamma$ introduced above, i.e.
\[
 \Gamma:H^s(\R^3) \to H^{s-2}(\R^3),\quad \phi \mapsto e^\phi-1-\Delta \phi
\]
one sees that it is injective -- see \cite{lwp}. Therefore we can use $\Gamma^{-1}$ in the following. Using the notation of Lemma \ref{lemma_local_diff} we have

\begin{Lemma}\label{lemma_inverse}
Let $\bar \rho_0 = \rho_0-1 \in U_{\bar \rho}$ and $\varphi_0 \in \Ds^s(\R^3)$. Then there is an open neighborhood $W \times V \subseteq  \Ds^s(\R^3) \times H^{s-2}(\R^3)$ of $(\varphi_0,\bar \rho_0)$ such that
\[
 W \times V \to H^s(\R^3),\quad (\varphi,\bar \rho) \mapsto R_\varphi \Gamma^{-1}(\bar \rho \circ \varphi^{-1})
\]
is analytic.
\end{Lemma}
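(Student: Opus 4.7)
The main obstacle to a naive proof is that the inverse-composition map $(\varphi,f)\mapsto f\circ\varphi^{-1}$ is not smooth (let alone analytic) jointly in $(\varphi,f)$ in any Sobolev scale, due to loss of derivatives; so one cannot simply compose $\Gamma^{-1}$ with it. The plan is instead to recast the elliptic problem in Lagrangian coordinates, taking $\tilde\phi:=R_\varphi\phi = \phi\circ\varphi$ as the new unknown. If $\phi = \tilde\phi\circ\varphi^{-1}$ solves $\Gamma(\phi)=\bar\rho\circ\varphi^{-1}$, then composing with $\varphi$ gives
\[
e^{\tilde\phi}-1-L_\varphi\tilde\phi = \bar\rho,
\]
where $L_\varphi\tilde\phi := (\Delta(\tilde\phi\circ\varphi^{-1}))\circ\varphi$ is the pullback Laplacian. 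Crucially, the chain rule turns $L_\varphi$ into an ordinary second-order differential operator on $\tilde\phi$ whose coefficients are polynomials in the entries of $(d\varphi)^{-1}$ and $D^2\varphi$, so no composition with $\varphi^{-1}$ actually remains.

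The first step will be to define $\tilde\Gamma:\Ds^s(\R^3)\times H^s(\R^3)\to H^{s-2}(\R^3)$ by $\tilde\Gamma(\varphi,\tilde\phi):=e^{\tilde\phi}-1-L_\varphi\tilde\phi$ and to verify that $\tilde\Gamma$ is analytic. The exponential part is analytic on $H^s$ by the Banach algebra property. For $L_\varphi$, the map $\varphi\mapsto d\varphi\in I+H^{s-1}$ is affine, and matrix inversion is analytic at invertible matrices, so $\varphi\mapsto (d\varphi)^{-1}$ is analytic into $I+H^{s-1}$ (using that $H^{s-1}$ is a Banach algebra, which is where $s>5/2$ enters). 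Multiplying such coefficients against $D^2\tilde\phi\in H^{s-2}$, $\nabla\tilde\phi\in H^{s-1}$ and $D^2\varphi\in H^{s-2}$ via the continuous pairing $H^{s-1}\times H^{s-2}\to H^{s-2}$ keeps the output in $H^{s-2}$, so $L_\varphi\tilde\phi$ is analytic in $(\varphi,\tilde\phi)$.

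The second step is to apply the analytic Implicit Function Theorem at the base point $(\varphi_0,\tilde\phi_0)$ defined by $\phi_0:=\Gamma^{-1}(\bar\rho_0\circ\varphi_0^{-1})$ and $\tilde\phi_0:=\phi_0\circ\varphi_0$; by construction $\tilde\Gamma(\varphi_0,\tilde\phi_0)=\bar\rho_0$. Formally $\tilde\Gamma(\varphi,\cdot)=R_\varphi\circ\Gamma\circ R_{\varphi^{-1}}$, so differentiating in $\tilde\phi$ yields
\[
d_{\tilde\phi}\tilde\Gamma(\varphi_0,\tilde\phi_0) = R_{\varphi_0}\circ d_\phi\Gamma(\phi_0)\circ R_{\varphi_0^{-1}},
\]
a composition of isomorphisms ($R_{\varphi_0^{-1}}:H^s\to H^s$ with inverse $R_{\varphi_0}$; $d_\phi\Gamma(\phi_0):H^s\to H^{s-2}$ from the proof of Lemma \ref{lemma_local_diff}; and $R_{\varphi_0}:H^{s-2}\to H^{s-2}$). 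The analytic IFT then produces a neighborhood $W\times V\subseteq\Ds^s(\R^3)\times H^{s-2}(\R^3)$ of $(\varphi_0,\bar\rho_0)$ on which the unique solution $(\varphi,\bar\rho)\mapsto\tilde\phi(\varphi,\bar\rho)$ of $\tilde\Gamma(\varphi,\tilde\phi)=\bar\rho$ is analytic, and unwinding the substitution gives $\tilde\phi(\varphi,\bar\rho)=R_\varphi\Gamma^{-1}(\bar\rho\circ\varphi^{-1})$, as required.

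The hard part is the first step, namely establishing analyticity of $L_\varphi$: one has to write the coefficients out explicitly via the chain rule and estimate each product in the correct Sobolev space, and the threshold $s>5/2$ is used precisely so that $H^{s-1}$ is an algebra that multiplies $H^{s-2}$ into itself. Once this is in hand, the remainder is standard elliptic theory together with the analytic IFT.
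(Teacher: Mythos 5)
Your proof is correct and takes essentially the same approach as the paper: the key move in both is to conjugate $\Gamma$ by $R_\varphi$ so that the problematic left-composition $(\varphi,f)\mapsto f\circ\varphi^{-1}$ disappears, leaving a second-order differential operator whose coefficients depend analytically on $\varphi$, after which one invokes an inverse/implicit function theorem using that $R_{\varphi_0}\circ d_\phi\Gamma(\phi_0)\circ R_{\varphi_0^{-1}}$ is an isomorphism. The paper phrases this as the Inverse Function Theorem for $\Pi(\varphi,\phi)=(\varphi,R_\varphi\Gamma(\phi\circ\varphi^{-1}))$ with a block-triangular differential, while you apply the analytic Implicit Function Theorem directly to $\tilde\Gamma(\varphi,\tilde\phi)=\bar\rho$; these are the same argument in slightly different packaging.
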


Note that by Lemma \ref{lemma_local_diff} the expression $R_\varphi \Gamma^{-1}(\bar \rho \circ \varphi^{-1})$ is defined for $W$ and $V$ small enough.

\begin{proof}
Consider the map
\begin{equation}\label{pi}
 \Pi:(\varphi,\phi) \mapsto (\varphi,R_\varphi \Gamma(\phi \circ \varphi^{-1}))
\end{equation}
Note that
\[
 \Pi(\varphi,\phi)=(\varphi,e^\phi-1-R_\varphi \Delta (\phi \circ \varphi^{-1}))
\]
We have
\[
 R_\varphi \Delta (\phi \circ \varphi^{-1}) = \sum_{k=1}^3 R_\varphi \partial_k R^{-1}_\varphi R_\varphi \partial_k R_\varphi^{-1} \phi
\]
By the chain rule
\[
 R_\varphi \nabla (\phi \circ \varphi^{-1})=[d\varphi^\top]^{-1} \nabla \phi
\]
which shows that for $1 \leq s' \leq s$ and $k=1,2,3$
\[
 \Ds^s(\R^3) \times H^{s'}(\R^3) \to H^{s'-1}(\R^3),\quad (\varphi,\phi) \mapsto R_\varphi \partial_k R_\varphi^{-1} \phi
\]
is analytic -- see also \cite{lagrangian}. Thus $\Pi$ is analytic. Let $\phi_0=F^{-1}(\rho_0)$. The differential of $\Pi$ at $(\varphi_0,\phi_0)$ is of the form
\[
 d_{(\varphi_0,\phi_0)}\Pi (g,h) = \left(\begin{array}{cc} g & 0 \\ \ast & R_{\varphi_0} \left(e^{\phi_0 \circ \varphi_0^{-1}} \cdot (h \circ \varphi_0^{-1})-\Delta (h\circ \varphi_0^{-1})\right) \end{array}\right)
\]
showing that it is an isomorphism. By the Inverse Function Theorem we conclude that $\Pi$ is an analytic diffeomorphism in a neighborhood of $(\varphi_0,\phi_0)$ in $\Ds^s(\R^3) \times H^s(\R^3)$. As
\[
R_\varphi \Gamma^{-1}(\bar \rho \circ \varphi^{-1})
\]
is the second component of $\Pi^{-1}$ the claim follows.
\end{proof}

Note that $\det(d\varphi) \in 1 + H^{s-1}$. The map
\[
 \Ds^s(\R^3) \times (1+U_{\bar \rho}) \to H^{s-1}(\R^3),\quad (\varphi,\rho_0) \mapsto \frac{\rho_0}{\det(d\varphi)} - 1 
\]
is analytic. Thus we get from Lemma \ref{lemma_inverse} that
\[
 \Ds^s(\R^3) \times (1+U_{\bar \rho}) \to H^s(\R^3),\quad (\varphi,\rho_0) \mapsto \phi \circ \varphi = R_\varphi \Gamma^{-1}(\left(\frac{\rho_0}{\det(d\varphi)}-1\right) \circ \varphi^{-1})
\]
is analytic. But we need more regularity for $\nabla \phi$.

\begin{Lemma}\label{lemma_analytic_linear}
The map
\begin{align*}
 \Ds^s(\R^3) \times H^s(\R^3) \times H^{s-2}(\R^3) &\to H^s(\R^3)\\
 (\varphi,\phi,\bar \rho) &\mapsto R_\varphi (e^{\phi \circ \varphi^{-1}}-\Delta)^{-1}(\bar \rho \circ \varphi^{-1})   
\end{align*}
is analytic.
\end{Lemma}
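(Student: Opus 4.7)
My plan is to adapt the Inverse Function Theorem strategy already used in Lemma \ref{lemma_inverse}. Set $\psi := R_\varphi (e^{\phi \circ \varphi^{-1}} - \Delta)^{-1}(\bar\rho \circ \varphi^{-1}) \in H^s(\R^3)$; this is the quantity whose analytic dependence on $(\varphi,\phi,\bar\rho)$ must be shown. Letting $u := \psi \circ \varphi^{-1}$, the definition of $\psi$ is exactly the equation $e^{\phi \circ \varphi^{-1}} u - \Delta u = \bar\rho \circ \varphi^{-1}$ in $H^{s-2}(\R^3)$. Applying $R_\varphi$ on both sides and using that $R_\varphi$ is a continuous algebra homomorphism on $H^s$ yields the equivalent equation
\[
L(\varphi,\phi)\psi = \bar\rho, \qquad L(\varphi,\phi)\psi := e^\phi \cdot \psi - R_\varphi \Delta R_\varphi^{-1} \psi,
\]
posed in $H^{s-2}(\R^3)$. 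So the task reduces to showing that $L(\varphi,\phi)$ inverts analytically in its parameters.

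First I would argue that $L$ is analytic as a map $\Ds^s(\R^3) \times H^s(\R^3) \to \mathcal L(H^s, H^{s-2})$. Multiplication by $e^\phi$ is analytic in $\phi$ by the Banach algebra property of $H^s$, and for the second-order term I reuse the decomposition $R_\varphi \Delta R_\varphi^{-1} = \sum_{k=1}^3 (R_\varphi \partial_k R_\varphi^{-1})^2$ from the proof of Lemma \ref{lemma_inverse}, whose analytic $\varphi$-dependence was established there via the chain-rule identity $R_\varphi \nabla R_\varphi^{-1} = [d\varphi^\top]^{-1} \nabla$. Consequently the augmented map
\[
\Xi : (\varphi,\phi,\psi) \mapsto (\varphi,\phi,L(\varphi,\phi)\psi)
\]
from $\Ds^s(\R^3) \times H^s(\R^3) \times H^s(\R^3)$ to $\Ds^s(\R^3) \times H^s(\R^3) \times H^{s-2}(\R^3)$ is analytic. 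Its differential at any base point $(\varphi_0,\phi_0,\psi_0)$ has the block-triangular shape $(g,h,k) \mapsto (g, h, \ast + L(\varphi_0,\phi_0) k)$, so the Inverse Function Theorem applies as soon as $L(\varphi_0,\phi_0) : H^s \to H^{s-2}$ is a linear isomorphism. The map in the statement is then read off as the third component of $\Xi^{-1}$, and its analyticity follows.

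The main obstacle I expect is verifying this last isomorphism property for $L(\varphi_0,\phi_0)$. Conjugating by $R_{\varphi_0}$, which is bijective and bounded on both $H^s$ and $H^{s-2}$, this reduces to showing that $u \mapsto e^{\phi_0 \circ \varphi_0^{-1}} u - \Delta u$ is an isomorphism $H^s(\R^3) \to H^{s-2}(\R^3)$. Since $\phi_0 \circ \varphi_0^{-1} \in H^s(\R^3)$ (as $\varphi_0$ is a diffeomorphism in $\Ds^s$), the coefficient $e^{\phi_0 \circ \varphi_0^{-1}}$ lies in $1 + H^s(\R^3)$ and is strictly positive and bounded below, so this is precisely the linear elliptic statement already invoked in Lemma \ref{lemma_local_diff} to identify $d_{\phi_0}\Gamma$ as an isomorphism. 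Once this is in hand, the Inverse Function Theorem and the third-component extraction complete the argument.
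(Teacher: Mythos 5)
Your proposal is correct and takes essentially the same approach as the paper: your augmented map $\Xi$ is identical to the paper's $\Theta$ (since $R_\varphi(e^{\phi\circ\varphi^{-1}}(\psi\circ\varphi^{-1})-\Delta(\psi\circ\varphi^{-1})) = e^\phi\cdot\psi - R_\varphi\Delta R_\varphi^{-1}\psi$), and you invert it by the same Inverse Function Theorem argument with the same block-triangular differential. You simply spell out two steps the paper leaves implicit, namely the analyticity of $\Xi$ via the conjugated-Laplacian decomposition already used in Lemma \ref{lemma_inverse}, and the isomorphism property of the diagonal block via conjugation by $R_{\varphi_0}$ and the elliptic argument from Lemma \ref{lemma_local_diff}.
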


\begin{proof}
We proceed as in Lemma \ref{lemma_local_diff} look at the inverse expression. That is consider 
\begin{align}
\label{omega}
\begin{split}
\Theta:\Ds^s(\R^3) \times H^s(\R^3) \times H^s(\R^3) &\to \Ds^s(\R^3) \times H^s(\R^3) \times H^{s-2}(\R^3)\\
(\varphi,\phi,\xi) &\mapsto (\varphi,\phi,R_\varphi (e^{\phi \circ \varphi^{-1}}-\Delta)(\xi \circ \varphi^{-1}))
\end{split}
\end{align}
which is analytic. Its differential is of the form
\[
 d_{(\varphi,\phi,\xi)} \Theta (g,h,f) = \left( \begin{array}{ccc} g & 0 & 0\\0 & h &0\\ \ast & \ast & R_\varphi (e^{\phi \circ \varphi^{-1}}-\Delta)(f \circ \varphi^{-1}) \end{array}\right)
\]
By the Inverse Function Theorem we get the conclusion as
\[
R_\varphi (e^{\phi \circ \varphi^{-1}}-\Delta)^{-1}(\bar \rho \circ \varphi^{-1})
\]
is the third component of $\Theta^{-1}$.
\end{proof}
Combining Lemma \ref{lemma_local_diff}, \ref{lemma_inverse} and \ref{lemma_analytic_linear} we have

\begin{Prop}\label{prop_analytic}
The map
\begin{align*}
\Ds^s(\R^3) \times (1+U_{\bar \rho}) &\to H^s(\R^3;\R^3) \\
(\varphi,\rho_0) &\mapsto R_\varphi \left(\nabla F^{-1}(\left(\frac{\rho_0}{\det(d\varphi)}\right) \circ \varphi^{-1})\right)
\end{align*}
is analytic.
\end{Prop}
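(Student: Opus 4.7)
The plan is to rewrite $R_\varphi \nabla\phi$ (where $\phi=F^{-1}(\rho)$ and $\rho=(\rho_0/\det(d\varphi))\circ\varphi^{-1}$) in a form that matches Lemma \ref{lemma_analytic_linear}, and to feed that lemma inputs already known to be analytic in $(\varphi,\rho_0)$. The key algebraic observation is that differentiating the elliptic relation $e^\phi-\Delta\phi=\rho$ in $x$ gives $(e^\phi-\Delta)\nabla\phi=\nabla\rho$, hence $\nabla\phi=(e^\phi-\Delta)^{-1}\nabla\rho$ and therefore
\[
 R_\varphi \nabla\phi \;=\; R_\varphi\,\bigl(e^{(\phi\circ\varphi)\circ\varphi^{-1}}-\Delta\bigr)^{-1}\bigl((\nabla\rho\circ\varphi)\circ\varphi^{-1}\bigr).
\]
This is precisely the map of Lemma \ref{lemma_analytic_linear} evaluated at $(\varphi,\,\phi\circ\varphi,\,\nabla\rho\circ\varphi)\in\Ds^s\times H^s\times H^{s-2}$.

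It therefore suffices to verify that $(\varphi,\rho_0)\mapsto\phi\circ\varphi\in H^s$ and $(\varphi,\rho_0)\mapsto\nabla\rho\circ\varphi\in H^{s-2}$ are analytic. The first is the statement already recorded in the unnumbered paragraph following Lemma \ref{lemma_inverse}: the map $(\varphi,\rho_0)\mapsto\rho_0/\det(d\varphi)-1$ is analytic into $H^{s-1}\hookrightarrow H^{s-2}$, and composing with Lemma \ref{lemma_inverse} yields the analyticity of $\phi\circ\varphi=R_\varphi\Gamma^{-1}\bigl((\rho_0/\det(d\varphi)-1)\circ\varphi^{-1}\bigr)$.

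For the second, the chain rule applied to $\rho=f\circ\varphi^{-1}$ with $f=\rho_0/\det(d\varphi)$ gives
\[
\nabla\rho\circ\varphi \;=\; (d\varphi)^{-\top}\,\nabla f.
\]
The factor $(d\varphi)^{-\top}$ is analytic in $\varphi$ with values in $\operatorname{id}+H^{s-1}$ (its entries are rational functions of the entries of $d\varphi$ with nonvanishing denominator $\det(d\varphi)>0$), and $\nabla f$ is analytic in $(\varphi,\rho_0)$ with values in $H^{s-2}$. Since $s>5/2$ makes $H^{s-1}$ a Banach algebra that multiplies $H^{s-2}$ into itself, the product lies in $H^{s-2}$ and depends analytically on $(\varphi,\rho_0)$. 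Applying Lemma \ref{lemma_analytic_linear} to these two inputs then closes the argument.

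The main obstacle I expect is the rewriting step itself: a naive attempt would obtain $\phi$ from Lemma \ref{lemma_inverse} and then differentiate, but this only controls $\phi\circ\varphi$ in $H^s$ and hence $\nabla(\phi\circ\varphi)$ in $H^{s-1}$, which is one derivative short of the target. Trading one derivative on $\phi$ for one on $\rho$ before inverting the elliptic operator $e^\phi-\Delta$ is exactly what matches the $(H^s,H^{s-2})\to H^s$ regularity budget of Lemma \ref{lemma_analytic_linear}; once this rewriting is in place, everything else is bookkeeping with analytic compositions.
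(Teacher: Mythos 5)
Your argument is essentially identical to the paper's: both differentiate the elliptic relation to get $\nabla\phi=(e^\phi-\Delta)^{-1}\nabla\rho$, rewrite $\nabla\rho\circ\varphi$ via the chain rule as $[d\varphi^\top]^{-1}\nabla(\rho_0/\det(d\varphi))$, and then invoke Lemma~\ref{lemma_analytic_linear} at the triple $(\varphi,\phi\circ\varphi,\nabla\rho\circ\varphi)$. Your closing remark about why one cannot simply differentiate $\phi\circ\varphi$ (losing a derivative in $H^{s-1}$) correctly identifies the reason the paper routes through Lemma~\ref{lemma_analytic_linear} with the $(H^s,H^{s-2})\to H^s$ budget.
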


\begin{proof}
Applying $\nabla$ to $e^\phi-\Delta \phi=\rho$ gives
\[
 e^\phi \cdot \nabla \phi - \Delta \nabla \phi = \nabla \rho
\]
or
\[
 \nabla \phi = (e^\phi-\Delta)^{-1} \nabla \rho
\]
and
\[
 R_\varphi \nabla \phi =R_\varphi \left( (e^{(\phi \circ \varphi) \circ \varphi^{-1}}-\Delta)^{-1} \nabla \rho\right)
\]
Substituting $\rho$ gives
\[
 \nabla \rho = \left( [d\varphi^\top]^{-1} \nabla \left(\frac{\rho_0}{\det(d\varphi)}\right) \right) \circ \varphi^{-1}
\]
As 
\[
 \Ds^s(\R^3) \times (1+U_{\bar \rho}) \to H^s(\R^3),\quad (\varphi,\rho_0) \mapsto \phi \circ \varphi
\]
and 
\[
 \Ds^s(\R^3) \times (1+U_{\bar \rho}) \to H^{s-2}(\R^3;\R^3),\quad (\varphi,\rho_0) \mapsto [d\varphi^\top]^{-1} \nabla \left(\frac{\rho_0}{\det(d\varphi)}\right)
\]
are analytic we get by Lemma \ref{lemma_analytic_linear} that
\[
 \Ds^s(\R^3) \times (1+U_{\bar \rho}) \to H^s(\R^3;\R^3),\quad (\varphi,\rho_0) \mapsto \nabla \phi \circ \varphi
\]
is analytic which concludes the proof
\end{proof}

Consider the differential equation for the variables $(\varphi,v) \in \Ds^s(\R^3) \times H^s(\R^3;\R^3)$
\begin{equation}\label{ode}
 \frac{d}{dt} \left( \begin{array}{c} \varphi \\ v \end{array}\right) = 
\left(\begin{array}{c} v \\ -R_\varphi \left(\nabla F^{-1}(\left(\frac{\rho_0}{\det(d\varphi)}\right) \circ \varphi^{-1})\right) \end{array}\right)
\end{equation}
By Proposition \ref{prop_analytic} this is an analytic ODE depending analytically on the parameter $\rho_0$. Consider the Cauchy problem with initial conditions $\varphi(0)=\operatorname{id}$ and $v(0)=u_0$. The solution $\varphi$ provides via 
\[
 u(t):=\varphi_t(t) \circ \varphi(t)^{-1} \quad \mbox{and} \quad
 \rho(t)=\left(\frac{\rho_0}{\det(d\varphi(t))}\right) \circ \varphi(t)^{-1}
\]
a solution to \eqref{ion} in $(\rho,u) \in C([0,T];(1+U_{\bar \rho}) \times H^s(\R^3;\R^3))$. Thus by the existence and uniqueness result for ODEs and the composition properties of $\Ds^s(\R^3)$ we get the local wellposedness of \eqref{ion} -- see also \cite{lagrangian}. Furthermore as the trajectories of the ions are described by the analytic curves
\[
 t \mapsto \varphi(t,x)
\]
we get Theorem \ref{analytic_trajectory}.

\section{Nonuniform dependence}\label{section_nonuniform}

We denote by $\omega$ the vorticity of $u$, i.e
\[
 \omega=\operatorname{curl} u
\]
Taking $\operatorname{curl}$ in the second equation in \eqref{ion} gives
\begin{equation}\label{vorticity_equation}
 \omega_t + (u \cdot \nabla) \omega + \operatorname{div} u \cdot \omega - (\omega \cdot \nabla)u=0
\end{equation}
Now we take the $t$ derivative in
\begin{align*}
\frac{d}{dt}\left(\det(d\varphi) [d\varphi]^{-1} \omega \circ \varphi\right)=&
\det(d\varphi) \operatorname{div}u \circ \varphi [d\varphi]^{-1} \omega \circ \varphi \\
&- \det(d\varphi) [d\varphi]^{-1} [d\varphi_t] [d\varphi]^{-1} \omega \circ \varphi\\
&+ \det(d\varphi) [d\varphi]^{-1} (\omega_t + (u \cdot \nabla)\omega) \circ \varphi 
\end{align*}
Using $d\varphi_t=du \circ \varphi \cdot d\varphi$ and \eqref{vorticity_equation} gives
\[
 \frac{d}{dt}\left(\det(d\varphi) [d\varphi]^{-1} \omega \circ \varphi\right)=0
\]
or
\begin{equation}\label{transport}
 \omega(t) = \left(\frac{1}{\det(d\varphi(t))} [d\varphi(t)] \omega_0\right) \circ \varphi(t)^{-1}
\end{equation}
where $\omega_0=\operatorname{curl} u_0$. With the help of \eqref{transport} we can express $\omega(t)$ as some sort of ''pullback'' of $\omega_0$. This will be useful for our purpose. For the time $T > 0$ we denote as above by $U_T \subseteq (1+U_{\bar \rho}) \times H^s(\R^3;\R^3)$ the domain of definition of the time $T$ solution map $\Phi_T$. With $\Psi_T$ we denote the solution map in Lagrangian coordinates, i.e.
\[
 \Psi_T:U_T \to \Ds^s(\R^3),\quad (\rho_0,u_0) \mapsto \varphi(T)
\] 
where $\varphi(T)$ denotes the time $T$ value of the $\varphi$ component of the solution in \eqref{ode} with initial conditions $\varphi(0)=\operatorname{id}$ and $v(0)=u_0$. Note that $\Psi_T$ is analytic. Later we will use the following technical lemma
\begin{Lemma}\label{lemma_dense}
There is a dense subset $S \subseteq U_T$ with the property that for each $(\rho_\bullet,u_\bullet) \in S$ we have that $u_\bullet$ is compactly supported and there is $h=(h_\rho,h_u) \in H^{s-1}(\R^3) \times H^s(\R^3;\R^3)$ and $x^\ast \in \R^3$ such that $\operatorname{dist}(x^\ast,\operatorname{supp}u_\bullet) > 2$ (i.e. the distance of the point $x^\ast$ to the support of $u_\bullet$ is greater than 2) with
\[
 \left(d_{(\rho_\bullet,u_\bullet)}\Psi_T (h)\right)(x^\ast) \neq 0
\] 
\end{Lemma}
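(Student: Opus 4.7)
The plan is to combine the density of compactly supported initial data with an analyticity-based non-vanishing argument. Since $C_c^\infty(\R^3)$ is dense in $H^{s-1}$ and $H^s$, the subset $D' \subset U_T$ consisting of $(\rho,u)$ for which both $\bar\rho = \rho - 1$ and $u$ have compact support is dense in $U_T$. It therefore suffices to approximate an arbitrary $(\rho_\bullet, u_\bullet) \in D'$, with $\operatorname{supp}\bar\rho_\bullet \cup \operatorname{supp} u_\bullet \subset B_R(0)$ say, by an element of the target set $S$.

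Fix $x^\ast \in \R^3$ with $|x^\ast| > R + 4$ and a divergence-free $h_u \in H^s(\R^3;\R^3)$ with $h_u(x^\ast) \neq 0$, and set $h := (0, h_u)$. The $\R^3$-valued map
\[
F: U_T \to \R^3, \quad (\rho, u) \mapsto \bigl(d_{(\rho,u)}\Psi_T(h)\bigr)(x^\ast)
\]
is real-analytic, since $\Psi_T$ is analytic by Proposition \ref{prop_analytic} and the analytic ODE \eqref{ode}, and evaluation at $x^\ast$ is a bounded linear functional on $H^s$ (Sobolev embedding, $s > 3/2$). At the equilibrium $(1,0)$ the linearization of \eqref{ion} around $\rho \equiv 1$, $u \equiv 0$ reads $\tilde\rho_t = -\operatorname{div}\tilde u$, $\tilde u_t = -\nabla\tilde\phi$, $\tilde\phi = (1-\Delta)^{-1}\tilde\rho$; with initial data $\tilde\rho(0) = 0$, $\tilde u(0) = h_u$, divergence-freeness of $h_u$ gives $\tilde\rho_t(0) = 0$, whence $\tilde\rho \equiv 0$, $\tilde u(t) \equiv h_u$ and the linearized Lagrangian flow satisfies $\tilde\varphi(T) = T h_u$. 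Consequently $F(1,0) = T h_u(x^\ast) \neq 0$. Since $U_T$ is connected and $F$ is real-analytic and non-constant, the zero set $\{F = 0\}$ has empty interior in $U_T$.

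Hence for any $\delta > 0$ we may find $(\rho', u') \in U_T$ within $\delta/2$ of $(\rho_\bullet, u_\bullet)$ with $F(\rho', u') \neq 0$. Multiplying $\bar\rho'$ and $u'$ by a smooth cutoff $\chi$ that equals $1$ on $B_{R+1}$ and vanishes outside $B_{R+2}$ yields $(\rho'', u'') \in D'$; because $\bar\rho_\bullet$ and $u_\bullet$ already vanish outside $B_R$, the truncation error is controlled by $\|(\rho' - \rho_\bullet, u' - u_\bullet)\|_{H^{s-1}\times H^s}$, so $(\rho'', u'')$ stays in the open set $\{F \neq 0\} \cap U_T$ and within $\delta$ of $(\rho_\bullet, u_\bullet)$. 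Then $u''$ is compactly supported in $B_{R+2}$, $\operatorname{dist}(x^\ast, \operatorname{supp} u'') \geq |x^\ast| - (R+2) > 2$, and $\bigl(d_{(\rho'',u'')}\Psi_T(h)\bigr)(x^\ast) = F(\rho'', u'') \neq 0$, so $(\rho'', u'')$ belongs to the desired dense set $S$. The main obstacle is the linearization at the equilibrium: the structural fact that a divergence-free velocity perturbation around $(\rho \equiv 1, u \equiv 0)$ propagates freely at first order is precisely what ensures $F(1,0) \neq 0$ and drives the entire density argument.
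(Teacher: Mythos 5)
Your high-level strategy matches the paper's: compute $d_{(1,0)}\Psi_T$ explicitly, show it is nonzero at a point far from the support of the data, and propagate the non-vanishing to a dense set by analyticity of $\Psi_T$. Two points of comparison are worth making, and there is one genuine gap.

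\textbf{The linearization.} You compute $d_{(1,0)}\Psi_T$ by linearizing the Euler--Poisson system in Eulerian variables around $(\rho,u)=(1,0)$ and observing that for divergence-free $h_u$ the density perturbation stays $0$ and the velocity perturbation is constant, so $\partial\varphi(T)=Th_u$. This is a cleaner route than the paper's, which works in Lagrangian variables and writes $\partial\varphi(T)=K\bar u$ with $K=\sum_{k\ge0}\frac{T^{2k+1}}{(2k+1)!}A^k$ a Fourier multiplier, then only invokes $m(0)=T\cdot\operatorname{Id}\neq 0$. Your calculation is consistent: expanding $[d\varphi^\top]^{-1}\nabla\frac{1}{\det d\varphi}$ at $\varphi=\operatorname{id}$ gives $-\nabla\operatorname{div}w$, so $B$ annihilates divergence-free $w$ and $Kh_u = Th_u$. (Incidentally, the paper's displayed formula for $B$ has extra terms that do not belong, but this does not affect its conclusion that $K\neq 0$.) Where the paper uses translation-invariance of $K$ to place the perturbation far away, you simply choose $h_u$ with $h_u(x^\ast)\neq 0$ from the start; both are fine.

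\textbf{The propagation by analyticity.} You invoke the identity theorem on the connected open set $U_T$ to conclude $\{F=0\}$ has empty interior; the paper restricts $F$ to a one-dimensional analytic curve from $(1,0)$ to $(\rho_\bullet,u_\bullet)$ and uses that a nonzero scalar analytic function of $t$ has isolated zeros. These are the same mechanism in different packaging. Both implicitly rely on being able to connect $(1,0)$ to $(\rho_\bullet,u_\bullet)$ inside $U_T$, which neither argument justifies in detail.

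\textbf{The gap.} Your final cut-off step does not work as stated. You find $(\rho',u')$ within $\delta/2$ of $(\rho_\bullet,u_\bullet)$ with $F(\rho',u')\neq 0$, then truncate by $\chi$ to get $(\rho'',u'')$ with compactly supported velocity, and claim that because the truncation error is controlled by $\|(\rho'-\rho_\bullet,u'-u_\bullet)\|$, the point $(\rho'',u'')$ ``stays in the open set $\{F\neq 0\}\cap U_T$.'' That inference is false: $(\rho',u')$ can be chosen in $\{F\neq 0\}$ but arbitrarily close to $\{F=0\}$, and even an arbitrarily small perturbation can then cross into $\{F=0\}$. To make the cut-off step work you would need to know that $F$ does not vanish identically, near $(\rho_\bullet,u_\bullet)$, on the affine subspace of data supported in the fixed compact set $\overline{B_{R+2}}$; nothing in your argument establishes this. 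The paper avoids the issue by choosing the analytic curve $\gamma$ so that its second component is \emph{already} compactly supported for every $t$: then $t\mapsto F(\gamma(t))$ is a scalar analytic function, nonzero at $t=0$, hence nonzero along a sequence $t_n\uparrow 1$, and each $\gamma(t_n)$ has compactly supported velocity by construction. You should replace the cut-off step by this analytic-curve argument (or by directly checking that $F$ restricted to the compactly-supported slice is not identically zero, e.g.\ because $(1,0)$ lies in that slice and can be connected to $(\rho_\bullet,u_\bullet)$ within it).
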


\begin{proof}
Our strategy is to get an equation for $d_{(1,0)} \Psi$. Consider for small $|\varepsilon|$, $\bar \rho \in H^{s-1}(\R^3)$ and $\bar u \in H^s(\R^3;\R^3)$ the ODE \eqref{ode}
\[
 \frac{d}{dt} \left( \begin{array}{c} \varphi^{(\varepsilon)} \\ v^{(\varepsilon)} \end{array} \right) = \left(\begin{array}{c} v^{(\varepsilon)} \\ G(\varphi^{(\varepsilon)},1+\varepsilon \bar \rho) \end{array}\right),\quad \varphi^{(\varepsilon)}(0) = \mbox{id}, v^{(\varepsilon)}(0)=\varepsilon \bar u
\]
where $G$ is the corresponding expression from \eqref{ode}. In particular we have
\[
 \varphi^{(0)}(t)=\mbox{id},\quad v^{(0)}(t)=0 \quad \forall t \geq 0
\]
We consider the variation
\[
 \partial \varphi(t) = \left. \frac{d}{d\varepsilon} \right|_{\varepsilon=0} \varphi^{(\varepsilon)}(t),\quad 
 \partial v(t) = \left. \frac{d}{d\varepsilon} \right|_{\varepsilon=0} v^{(\varepsilon)}(t)
\]
Calculating the variation in the ODE we get
\begin{equation}\label{ode_var}
 \frac{d}{dt} \left( \begin{array}{c} \partial \varphi \\ \partial v \end{array} \right) = \left(\begin{array}{c} \partial v \\ d_{(\operatorname{id},1)} G (\partial \varphi,\bar \rho) \end{array}\right),\quad \partial \varphi(0) = 0, \partial v(0)=\bar u
\end{equation}
We calculate $d_{(\operatorname{id},1)}G$. Note that with the maps from \eqref{pi} and \eqref{omega}
\[
 G(\varphi,1+\bar \rho)=(\Theta^{-1})_3\left(\varphi,(\Pi^{-1})_2(\varphi,\frac{1+\bar \rho}{\det(d\varphi)}-1),[d\varphi^\top]^{-1} \nabla \frac{1+\bar \rho}{\det(d\varphi)}\right)
\]
Using the rules for differentiating determinants we have for $k=1,2,3$
\[
 \partial_k \frac{1}{\det(d\varphi)} = -\frac{1}{(\det(d\varphi))^2} \det(d\varphi) \operatorname{tr}([d\varphi]^{-1} \partial_k d\varphi) = -\frac{1}{\det(d\varphi)} \operatorname{tr}([d\varphi]^{-1} \partial_k d\varphi)
\]
Thus the derivative of 
\[
 \varphi \mapsto [d\varphi^\top]^{-1} \nabla \frac{1}{\det(d\varphi)}= -\frac{1}{\det(d\varphi)} [d\varphi^\top]^{-1} \left(\begin{array}{c} \operatorname{tr}([d\varphi]^{-1} \partial_1 d\varphi)\\ \operatorname{tr}([d\varphi]^{-1} \partial_2 d\varphi)\\ \operatorname{tr}([d\varphi]^{-1} \partial_3 d\varphi)\end{array}\right) 
\]
in direction of $w \in H^s(\R^3;\R^3)$ at $\varphi=\operatorname{id}$ is given by
\[
 \operatorname{tr}(dw)\left(\begin{array}{c} 1 \\ 1 \\ 1 \end{array}\right)+dw^\top \left(\begin{array}{c} 1 \\ 1 \\ 1 \end{array}\right) + \left(\begin{array}{c} \partial_1 w_1 \\ \partial_2 w_2 \\ \partial_3 w_3 \end{array}\right)-\left(\begin{array}{c} \partial_1 (\partial_1 w_1+\partial_2 w_2+\partial_3 w_3) \\ \partial_2 (\partial_1 w_1+\partial_2 w_2+\partial_3 w_3) \\ \partial_3 (\partial_1 w_1+\partial_2 w_2+\partial_3 w_3) \end{array}\right)
\]
which we denote by $B(w)$. We see that $B:H^s(\R^3;\R^3) \to H^{s-2}(\R^3;\R^3)$ is a continuous linear map. The partial derivative of $G$ with respect to $\varphi$ at $(\operatorname{id},1)$ is then given by
\begin{align*}
 \left. \partial_\varphi \right|_{(\operatorname{id},1)} G (w) &= \left[(d_{(\operatorname{id},0)}\Theta)^{-1}\right]_{31}(w)+\left[(d_{(\operatorname{id},0)}\Theta)^{-1}\right]_{32}(\ast)+\left[(d_{(\operatorname{id},0)}\Theta)^{-1}\right]_{33}(Bw) \\
 &=0+0+(1-\Delta)^{-1} B w=(1-\Delta)^{-1} B w
\end{align*}
for $w \in H^s(\R^3;\R^3)$. We introduce 
\[
 A:H^s(\R^3;\R^3) \to H^s(\R^3;\R^3),\quad w \mapsto (1-\Delta)^{-1} B w.
\]
It is a Fourier multiplier operator with a continuous $L^\infty$ multiplier
\[
 m_A(\xi_1,\xi_2,\xi_3)=\frac{1}{(1+|\xi|^2)} \left(\begin{array}{ccc} -3i\xi_1+\xi_1^2 & -i\xi_1-i\xi_2+\xi_1 \xi_2 & -i\xi_1-i\xi_3 + \xi_1 \xi_3\\
 -i\xi_1-i\xi_2+\xi_1\xi_2 & -3i\xi_2+\xi_2^2 & -i\xi_2-i\xi_3+\xi_2\xi_3 \\
 -i\xi_1-i\xi_3+\xi_1\xi_3 & -i\xi_2-i\xi_3+\xi_2\xi_3 & -3i\xi_3+\xi_3^2
\end{array}\right)
\]
where $\xi=(\xi_1,\xi_2,\xi_3)$. The partial derivative of $G$ with respect to $\rho$ at $(\operatorname{id},1)$ in direction of $\bar \rho \in H^{s-1}(\R^3)$ is
\[
 \left. \partial_\rho \right|_{(\operatorname{id},1)} G(\bar \rho) = (1-\Delta)^{-1} \nabla \bar \rho
\]
Hence \eqref{ode_var} reads as
\[
 \frac{d}{dt} \left( \begin{array}{c} \partial \varphi \\ \partial v \end{array} \right) = \left(\begin{array}{cc} 0 & \operatorname{Id}  \\ A & 0 \end{array}\right) \left( \begin{array}{c} \partial \varphi \\ \partial v \end{array} \right) +  \left( \begin{array}{c} 0 \\ (1-\Delta)^{-1} \nabla \bar \rho \end{array} \right) ,\quad \partial \varphi(0) = 0, \partial v(0)=\bar u
\]
By Duhamel's principle the solution at time $T > 0$ is given by
\[
\left( \begin{array}{c} \partial \varphi(T) \\ \partial v(T) \end{array} \right) = e^M \left( \begin{array}{c} 0 \\ \bar u \end{array} \right) + \int_0^T e^{(T-s)M} \left( \begin{array}{c} 0 \\ (1-\Delta)^{-1} \nabla \bar \rho \end{array} \right) \;ds
\]
where
\[
 M=\left(\begin{array}{cc} 0 & \operatorname{Id} \\ A & 0 \end{array}\right)
\]
For $\partial \varphi(T)$ the only contribution comes from the right up entry. We have
\[
 e^M=\left(\begin{array}{cc} \ast & \sum_{k=0}^\infty \frac{T^{2k+1}}{(2k+1)!} A^k \\ \ast & \ast \end{array}\right) \mbox{ resp. }
  e^{(T-s)M}=\left(\begin{array}{cc} \ast & \sum_{k=0}^\infty \frac{(T-s)^{2k+1}}{(2k+1)!} A^k \\ \ast & \ast \end{array}\right)
\]
Integrating gives
\[
 \partial \varphi(T)=\sum_{k=0}^\infty \frac{T^{2k+1}}{(2k+1)!} A^k \bar u + \frac{T^{2k+2}}{(2k+2)!} A^k (1-\Delta)^{-1} \nabla \bar \rho
\]
Consider the operator
\[
 K=\sum_{k=0}^\infty \frac{T^{2k+1}}{(2k+1)!} A^k
\]
It is a Fourier multiplier operator with multiplier
\[
 m(\xi)=\sum_{k=0}^\infty \frac{T^{2k+1}}{(2k+1)!} m_A(\xi)^k
\]
which is a continuous and bounded function with $m(0)=T \cdot \operatorname{Id}$. Therefore $m$ is different from zero. Thus there is $\bar u \in H^s(\R^3;\R^3)$ with 
\[
 K \bar u \neq 0
\]
Now fix $(\rho_\bullet,u_\bullet) \in U_T$ with $u_\bullet$ compactly supported. Take $x^\ast \in \R^3$ with $\operatorname{dist}(x^\ast,\operatorname{supp}(u_\bullet)) > 2$. As $K$ is a Fourier multiplier operator it is translation invariant. So we can find $h_u=\bar u(\cdot+ \Delta x)$ with
\[
 (K h_u)(x^\ast) \neq 0
\]
With this choice of $h_u$ and $h_\rho=0$ we have
\[
 \left(d_{(1,0)}\Psi_T(h_\rho,h_u)\right)(x^\ast)=(\partial \varphi(T))(x^\ast)=(K h_u)(x^\ast) \neq 0
\] 
Take an analytic curve $\gamma=(\gamma_1,\gamma_2):[0,1] \to U_T$ connecting $(1,0)$ with $(\rho_\bullet,u_\bullet)$ with the property that the second component $\gamma_2(t)$ is always compactly supported. This is clearly possible as we can take an arbitrary analytic curve and then multiply $\gamma_2$ with a cut-off function. Now consider the analytic curve
\[
 \alpha:[0,1] \to \R^3,\quad t \mapsto \left(d_{\gamma(t)}\Psi_T(h_\rho,h_u)\right)(x^\ast)
\]
As $\alpha(0) \neq 0$ there is a sequence $t_n \uparrow 1$ with $\alpha(t_n) \neq 0$ for $n \geq 1$. We can put all these $\gamma(t_n)$ to $S$ after moving $x^\ast$ outside of the support of the cut-off function if necessary. By this construction we see that $S \subseteq U_T$ is dense.
\end{proof}

We can now prove Theorem \ref{thm_nonuniform}.
\begin{proof}[Proof of Theorem \ref{thm_nonuniform}]
We fix $(\rho_0,u_0) \in S$ where $S \subseteq U_T$ is as in Lemma \ref{lemma_dense}. In successive steps below we will choose $R_\ast > 0$ and then show that for all $0 < R \leq R_\ast$
\[
 \left. \Phi_T \right|_{B_R((\rho_0,u_0))}
\]
is not uniformly continuous, i.e. that the time $T$ solution map $\Phi_T$ restricted to $B_R((\rho_0,u_0))$ is not uniformly continuous. As $S$ is dense in $U_T$ this suffices clearly to finish the proof.\\
So denote by $\varphi_0=\Psi_T(\rho_0,u_0)$. By continuity we can choose $R_1 > 0$ and $C_1 > 0$ such that
\begin{equation}\label{below_above}
 \frac{1}{C_1} ||\operatorname{curl} u||_{s-1} \leq ||\frac{1}{\det(d\varphi)} [d\varphi] (\operatorname{curl} u) \circ \varphi^{-1}||_{s-1} \leq C_1 ||\operatorname{curl} u||_{s-1}
\end{equation}
for all $u \in B_{R_1}(u_0) \subseteq H^s(\R^3;\R^3)$ and $\varphi \in B_{R_1}(\varphi_0) \subseteq \Ds^s(\R^3)$ where $B_r$ denotes the ball of radius $r$ in the corresponding spaces -- see e.g \cite{composition,lagrangian}. Consider the Taylor expansion
\[
 \Psi_T(\rho_\bullet+h_\rho,u_\bullet+h_u)=\Psi_T(\rho_\bullet,u_\bullet) + d_{(\rho_\bullet,u_\bullet)} \Psi_T (h) + \int_0^1 (1-t) d^2_{(\rho_\bullet+th_\rho,u_\bullet+th_u)} \Psi_T (h,h)
\]
where $h=(h_\rho,h_u) \in H^{s-1}(\R^3) \times H^s(\R^3;\R^3)$. In the following we use the norm
\[
 |||h|||=||h_\rho||_{s-1} + ||h_u||_s
\]
By the smoothness of $\Psi_T$ we can choose $0 < R_2 \leq R_1$ and $C_2 > 0$ with
\[
 ||d^2_{(\rho_\bullet,u_\bullet)}\Psi_T(h_1,h_2)||_s \leq C_2 |||h_1||| \cdot |||h_2|||
\]
and
\[
 ||d^2_{(\widetilde \rho_\bullet,\widetilde u_\bullet)}\Psi_T(h_1,h_2)-d^2_{(\rho_\bullet,u_\bullet)}\Psi_T(h_1,h_2)||_s \leq C_2 |||(\widetilde \rho_\bullet-\rho_\bullet,\widetilde u_\bullet-u_\bullet)||| \cdot |||h_1||| \cdot |||h_2|||
\]
for all $(\rho_\bullet,u_\bullet),(\widetilde \rho_\bullet,\widetilde u_\bullet) \in B_{R_2}((\rho_0,u_0))$ and $h_1, h_2 \in H^{s-1}(\R^3) \times H^s(\R^3;\R^3)$. As $(\rho_0,u_0) \in S$ there is by Lemma \ref{lemma_dense} a corresponding $x^\ast \in \R^3$ with $\operatorname{dist}(\operatorname{supp}u_0,x^\ast) > 2$ and $h=(h_\rho,h_u) \in H^{s-1}(\R^3) \times H^s(\R^3;\R^3)$ with 
\[
 m:=|\left(d_{(\rho_0,u_0)} \Psi_T(h)\right)(x^\ast)| > 0
\]
which we fix. Here $|\cdot|$ denotes the euclidean norm in $\R^3$. Consider the distance 
\[
 d:=\operatorname{dist}(\varphi_0(\operatorname{supp}u_0),\varphi_0(B_1(x^\ast))) > 0
\]
Because of $s > 5/2$ we have by the Sobolev imbedding theorem 
\[
 ||f||_{C^1} \leq \tilde C ||f||_s
\]
for some $\tilde C > 0$. Thus there is $0 < R_3 \leq R_2$ with
\begin{equation}\label{lipschitz}
 |\Psi_T(\rho_\bullet,u_\bullet)(p)-\Psi_T(\rho_\bullet,u_\bullet)(q)| < L |p-q| \quad \forall\; p,q \in \R^3
\end{equation}
and
\begin{equation}\label{dist}
 |\Psi_T(\rho_\bullet,u_\bullet)(p)-\varphi_0(p)| < d/4 \quad \forall\;p \in \R^3
\end{equation}
for all $(\rho_\bullet,u_\bullet) \in B_{R_\ast}((\rho_0,u_0)) \subseteq U_T$. Finally we take $0 < R_\ast \leq R_3$ small enough and $N$ large enough to ensure
\[
 \tilde C C_2 |||h||| \cdot R_\ast^2/4 + \tilde C C_2 \frac{1}{n} |||h||| \cdot R_\ast + \tilde C C_2 \frac{1}{n^2} |||h|||^2 < \frac{m}{2n}
\]
for all $n \geq N$.\\
Now fix $0 < R \leq R_\ast$. We will construct two sequences of initial data $((\rho_0^{(n)},u_0^{(n)}))_{n \geq 1}, ((\tilde \rho_0^{(n)},\tilde u_0^{(n)}))_{n \geq 1} \subseteq B_{R}((u_0,\rho_0))$ with
\[
 |||(\rho_0^{(n)},u_0^{(n)})-(\tilde \rho_0^{(n)},\tilde u_0^{(n)})||| \to 0 \mbox{ as } n \to \infty
\]
whereas
\[
 \limsup_{n \to \infty} |||\Phi_T((\rho_0^{(n)},u_0^{(n)}))-\Phi_T((\tilde \rho_0^{(n)},\tilde u_0^{(n)}))||| > 0
\]
Define the radii $r_n=\dfrac{m}{8nL}$ with the Lipschitz constant from \eqref{lipschitz}. With that take a sequence $(w_n)_{n \geq 1} \subseteq H^s(\R^3;\R^3)$ with
\[
 \operatorname{supp} w_n \subseteq B_{r_n}(x^\ast) \subseteq \R^3 \mbox{ and } ||w_n||_s=R/2
\]
For some technical reason we assume additionally $\operatorname{div} w_n=0$ which is not difficult to arrange. Finally define
\[
 \left(\begin{array}{c} \rho_0^{(n)} \\ u_0^{(n)} \end{array}\right) = \left(\begin{array}{c} \rho_0 \\ u_0 \end{array}\right) + \left(\begin{array}{c} 0 \\ w_n \end{array}\right)
\]
resp.
\[
 \left(\begin{array}{c} \tilde \rho_0^{(n)} \\ \tilde u_0^{(n)} \end{array}\right) = \left(\begin{array}{c} \rho_0 \\ u_0 \end{array}\right) + \left(\begin{array}{c} 0 \\ w_n \end{array}\right) + \frac{1}{n} \left( \begin{array}{c} h_\rho \\ h_u \end{array} \right)
\]
We clearly have 
\[
 (\tilde \rho_0^{(n)},\tilde u_0^{(n)}), (\rho_0^{(n)},u_0^{(n)})\in B_R((\rho_0,u_0)) \quad \forall n \geq N
\]
where $N$ is some large number. Taking $N$ large enough we can assume $r_n \leq 1$ for $n \geq N$. Furthermore by construction
\[
 |||(\rho_0^{(n)},u_0^{(n)})-(\tilde \rho_0^{(n)},\tilde u_0^{(n)})||| \to 0 \mbox{ as } n \to \infty
\]
Let 
\[
 (\rho^{(n)},u^{(n)})=\Phi_T(\rho_0^{(n)},u_0^{(n)}) \quad \mbox{resp.} \quad 
 (\tilde \rho^{(n)},\tilde u^{(n)})=\Phi_T(\tilde \rho_0^{(n)},\tilde u_0^{(n)})  
\]
Similarly
\[
 \varphi^{(n)}=\Psi_T(\rho_0^{(n)},u_0^{(n)}) \quad \mbox{resp.} \quad 
 \tilde \varphi^{(n)}=\Psi_T(\tilde \rho_0^{(n)},\tilde u_0^{(n)})  
\]
We also introduce
\[
 \omega^{(n)} = \operatorname{curl} u^{(n)} \quad \mbox{and} \quad \tilde \omega^{(n)} = \operatorname{curl} \tilde u^{(n)}
\]
As
\[
 |||\Phi_T((\rho_0^{(n)},u_0^{(n)}))-\Phi_T((\tilde \rho_0^{(n)},\tilde u_0^{(n)}))||| \geq ||u^{(n)}-\tilde u^{(n)}||_s \geq C ||\omega^{(n)}-\tilde \omega^{(n)}||_{s-1}
\]
we get the claim by showing
\[
\limsup_{n \to \infty} ||\omega^{(n)}-\tilde \omega^{(n)}||_{s-1} > 0
\]
By \eqref{transport} we have
\[
 \omega^{(n)}=\left(\frac{1}{\det(d\varphi^{(n)})} [d\varphi^{(n)}] \omega_0^{(n)}\right) \circ (\varphi^{(n)})^{-1}
\]
and
\[
 \tilde \omega^{(n)}=\left(\frac{1}{\det(d\tilde \varphi^{(n)})} [d\tilde \varphi^{(n)}] \tilde \omega_0^{(n)}\right) \circ (\tilde \varphi^{(n)})^{-1}
\]
where
\[
 \omega_0^{(n)}=\operatorname{curl} u_0^{(n)} = \operatorname{curl} u_0 + \operatorname{curl} w_n 
\]
and
\[
 \tilde \omega_0^{(n)}=\operatorname{curl} \tilde u_0^{(n)} = \operatorname{curl} u_0 + \operatorname{curl} w_n + \frac{1}{n} \operatorname{curl} h_u
\]
By \eqref{below_above} we have
\[
 ||\left(\frac{1}{\det(d\tilde \varphi^{(n)})} [d\tilde \varphi^{(n)}] \frac{1}{n} \operatorname{curl} h_u \right) \circ (\tilde \varphi^{(n)})^{-1}||_{s-1} \to 0
\]
as $n \to \infty$. Therefore
\begin{align*}
 &\limsup_{n \to \infty} ||\omega^{(n)} - \tilde \omega^{(n)}||_{s-1} =
 \limsup_{n \to \infty} ||\left(\frac{1}{\det(d\varphi^{(n)})} [d\varphi^{(n)}] (\operatorname{curl} u_0 + \operatorname{curl}w_n)\right) \circ (\varphi^{(n)})^{-1} \\
&- \left(\frac{1}{\det(d\tilde \varphi^{(n)})} [d\tilde \varphi^{(n)}] (\operatorname{curl} u_0 + \operatorname{curl}w_n)\right) \circ (\tilde \varphi^{(n)})^{-1}||_{s-1}
\end{align*}
Consider the supports of the above expressions. We have by \eqref{dist}
\[
 \operatorname{supp}\left( (\operatorname{curl}) u_0 \circ (\varphi^{(n)})^{-1}\right),\operatorname{supp}\left( (\operatorname{curl} u_0) \circ (\tilde \varphi^{(n)})^{-1}\right) \subseteq \varphi_0(\operatorname{supp} u_0) + B_{d/4}(0)
\]
where we use $A+B=\{a+b \;|\; a \in A, b \in B\}$. As $\operatorname{supp} w_n \subseteq B_1(x^\ast)$ for $n \geq N$ we have again by \eqref{dist}
\[
 \operatorname{supp}\left( (\operatorname{curl} w_n) \circ (\varphi^{(n)})^{-1}\right),\operatorname{supp}\left( (\operatorname{curl} w_n) \circ (\tilde \varphi^{(n)})^{-1}\right) \subseteq \varphi_0(B_1(x^\ast)) + B_{d/4}(0)
\]
By the the choice of $d$ the supports are in fixed sets which are positive apart. Thus we can ''separate'' the $||\cdot||_{s-1}$ norms with a constant -- see \cite{sqg}. So we have with a constant $C > 0$
\begin{align*}
& \limsup_{n \to \infty} ||\omega^{(n)}-\tilde \omega^{(n)}||_{s-1} \geq 
 C \limsup_{n \to \infty} ||\left(\frac{1}{\det(d\varphi^{(n)})} [d\varphi^{(n)}] (\operatorname{curl}w_n)\right) \circ (\varphi^{(n)})^{-1} \\
& - \left(\frac{1}{\det(d\tilde \varphi^{(n)})} [d\tilde \varphi^{(n)}] (\operatorname{curl}w_n)\right) \circ (\tilde \varphi^{(n)})^{-1}||_{s-1}
\end{align*}
We claim that the supports of the above expressions are also apart. To show this we use the Taylor expansion 
\begin{align*}
 &\tilde \varphi^{(n)} = \Psi(\rho_0,u_0) + d_{(\rho_0,u_0)}\Psi (\left(\begin{array}{c} 0 \\ w_n \end{array}\right) + \frac{1}{n} h ) \\
&+ \int_0^1 (1-t) d^2_{(\rho_0 + t \frac{1}{n} h_\rho,u_0 + t w_n + t \frac{1}{n} h_u)} \Psi (  \left(\begin{array}{c} 0 \\ w_n \end{array}\right) + \frac{1}{n} h,\left(\begin{array}{c} 0 \\ w_n \end{array}\right) + \frac{1}{n} h) \;dt
\end{align*}
and
\begin{align*}
 \varphi^{(n)} = \Psi(\rho_0,u_0) + d_{(\rho_0,u_0)}\Psi (\left(\begin{array}{c} 0 \\ w_n \end{array}\right) ) 
+ \int_0^1 (1-t) d^2_{(\rho_0,u_0 + t w_n)} \Psi (  \left(\begin{array}{c} 0 \\ w_n \end{array}\right),\left(\begin{array}{c} 0 \\ w_n \end{array}\right) ) \;dt
\end{align*}
The difference is
\[
 \tilde \varphi^{(n)} -  \varphi^{(n)} =  \frac{1}{n} d_{(\rho_0,u_0)}\Psi (h) + I_1 + I_2 + I_3
\]
where
\[
 I_1 = \int_0^1 (1-t) \left(d^2_{(\rho_0 + t \frac{1}{n} h_\rho,u_0 + t w_n + t \frac{1}{n} h_u)} \Psi - d_{(\rho_0,u_0+t w_n)} \Psi\right) (  \left(\begin{array}{c} 0 \\ w_n \end{array}\right),\left(\begin{array}{c} 0 \\ w_n \end{array}\right)) \;dt
\]
and
\[
 I_2 = 2 \int_0^1 (1-t) d^2_{(\rho_0 + t \frac{1}{n} h_\rho,u_0 + t w_n + t \frac{1}{n} h_u)} \Psi (  \left(\begin{array}{c} 0 \\ w_n \end{array}\right), \frac{1}{n} h) \;dt
\]
and
\[
 I_3 = \int_0^1 (1-t) d^2_{(\rho_0 + t \frac{1}{n} h_\rho,u_0 + t w_n + t \frac{1}{n} h_u)} \Psi ( \frac{1}{n} h,\frac{1}{n} h) \;dt
\]
Using the estimates for $d^2 \Psi$ from above we have
\[
 ||I_1||_s \leq C_2 \frac{1}{n} |||h||| \cdot ||w_n||_s^2 = C_2 |||h||| \cdot R^2/4
\]
and
\[
 ||I_2||_s  \leq 2 C_2 \frac{1}{n} |||h||| \cdot ||w_n||_s = C_2 \frac{1}{n} |||h||| \cdot R
\]
and
\[
 ||I_3||_s \leq C_2 \frac{1}{n^2} |||h|||^2
\]
By the Sobolev imbedding we then have 
\[
 |I_1(x^\ast)| + |I_2(x^\ast)| + |I_3(x^\ast)| \leq \tilde C C_2 |||h||| \cdot R^2/4 + \tilde C C_2 \frac{1}{n} |||h||| \cdot R + \tilde C C_2 \frac{1}{n^2} |||h|||^2 <\frac{m}{2n}
\]
for $n \geq N$ by the choice of $R_\ast$. Thus we have
\[
  |\tilde \varphi^{(n)}(x^\ast) -  \varphi^{(n)}(x^\ast)| \geq  \frac{1}{n} |\left(d_{(\rho_0,u_0)} \Psi_T(h)\right)(x^\ast)| - \frac{m}{2n} = \frac{m}{2n}
\]
By \eqref{lipschitz} we have
\[
 \operatorname{supp}\left( (\operatorname{curl}w_n) \circ (\varphi^{(n)})^{-1}\right) \subseteq
 B_{Lr_n}(\varphi^{(n)}(x^\ast))=B_{\frac{m}{8n}}(\varphi^{(n)}(x^\ast))
\]
and
\[
 \operatorname{supp}\left( (\operatorname{curl}w_n) \circ (\tilde \varphi^{(n)})^{-1}\right) \subseteq
 B_{Lr_n}(\tilde \varphi^{(n)}(x^\ast))=B_{\frac{m}{8n}}(\tilde \varphi^{(n)}(x^\ast))
\]
So the supports are in a way apart that we can seperate the $H^{s-1}$ norms (see \cite{sqg}) to conclude with a constant $\bar C > 0$
\begin{align*}
  &\limsup_{n \to \infty} ||\left(\frac{[d\varphi^{(n)}]}{\det(d\varphi^{(n)})}  (\operatorname{curl}w_n)\right) \circ (\varphi^{(n)})^{-1} 
 - \left(\frac{[d\tilde \varphi^{(n)}]}{\det(d\tilde \varphi^{(n)})}  (\operatorname{curl}w_n)\right) \circ (\tilde \varphi^{(n)})^{-1}||_{s-1}\\
&\geq \bar C \limsup_{n \to \infty} ||\left(\frac{[d\varphi^{(n)}]}{\det(d\varphi^{(n)})}  (\operatorname{curl}w_n)\right) \circ (\varphi^{(n)})^{-1}||_{s-1}
 + ||\left(\frac{[d\tilde \varphi^{(n)}]}{\det(d\tilde \varphi^{(n)})}  (\operatorname{curl}w_n)\right) \circ (\tilde \varphi^{(n)})^{-1}||_{s-1}
\end{align*}
Using \eqref{below_above} we can estimate this from below by
\[
 \frac{\bar C}{C_2} \limsup_{n \to \infty} ||\operatorname{curl}w_n||_{s-1} \geq \frac{\bar C \hat C}{C_2} \limsup_{n \to \infty} ||dw_n||_{s-1} 
\]
where the last inequality with some $\hat C > 0$ follows from the Biot-Savart law (see \cite{chemin}) for divergence free vector fields (Here is where we use $\operatorname{div} w_n=0$). We have the following general equivalence 
\[
 ||w_n||_s \sim ||w_n||_{L^2} + ||dw_n||_{s-1}
\]
But 
\[
 ||w_n||_{L^2} \leq ||w_n||_{L^\infty}  \sqrt{\frac{4}{3} \pi r_n^3} \leq \tilde C ||w_n||_s \sqrt{\frac{4}{3} \pi r_n^3} \to 0
\]
since $w_n$ is supported in $B_{r_n}(x^\ast)$. Therefore
\[
 \limsup_{n \to \infty} ||dw_n||_{s-1} \geq K \limsup_{n \to \infty} ||w_n||_s \geq K R/2 
\]
for some $K > 0$. Altogether we have
\[
 \limsup_{n \to \infty} ||\tilde \omega^{(n)}- \omega^{(n)}||_{s-1} \geq \frac{\bar C \hat C K}{2 C_2} R
\]
showing the claim.
\end{proof}

\bibliographystyle{plain}

\flushleft
\author{ Hasan \.{I}nci\\
Ko\c{c} \"Universitesi Matematik b\"ol\"um\"u\\
Rumelifeneri Yolu\\
34450 Sar{\i}yer \.{I}stanbul T\"urkiye\\
        {\it email: } {hinci@ku.edu.tr}
}

\end{document}